\newtheorem{thm}{Theorem}[section]
\newtheorem{rem}[thm]{Remark}
\newtheorem{assum}[thm]{Assumption}
\newtheorem{ack}{}
\newcommand{\mcb}{\mathcal{B}}
\newcommand{\mcc}{\mathcal{C}}
\newcommand{\mcf}{\mathcal{F}}
\newcommand{\mbbr}{\mathbb{R}}
\newcommand{\mbbrp}{\mathbb{R}_{+}}
\newcommand{\mbX}{\mathbf{X}}
\newcommand{\al}{\alpha}
\newcommand{\sig}{\sigma}
\newcommand{\D}{\Delta}
\newcommand{\p}{\partial}
\def\nn{\nonumber}
\def\sumj{\sum_{j=1}^{n}}
\def\hatb{\hat{\beta}}
\title[Update estimation of diffusion parameter observed at high frequency]{
Update estimation of diffusion parameter\\ observed at high frequency
}
\author{Yusuke Shimizu}
\address{Graduate School of Mathematics, Kyushu University. 
744 Motooka, Nishi-ku, Fukuoka 819-0395, Japan.}
\email{y-shimizu@math.kyushu-u.ac.jp}
\date{\today}
\keywords{Diffusion process, High-frequency sampling, Recursive estimation}
\begin{document}
\maketitle

\begin{abstract}

We propose an update estimation method for a diffusion parameter from high-frequency dependent data under a nuisance drift element. We ensure the asymptotic equivalence of the estimator to the corresponding quasi-MLE, which has the asymptotic normality and the asymptotic efficiency. 
We give a simulation example to illustrate the theory.
\end{abstract}


\section{Introduction}
In general, under some appropriate conditions, $M$-estimators enjoy some nice properties, for example, the asymptotic efficiency of (quasi-)MLE and oracle properties of regularized estimators. 
However, it may suffer from heavy computation load since it is batch estimation in the sense that we have to optimize the appropriate objective function.
It would be of great help to be able to carry out recursive estimation, where we update the estimator by doing some fine tuning of the previous one. 
In this way, it enables us to process enormous data effectively. 
Recursive estimation is established as an application of stochastic approximation theory, which is mainly used in control system and computer science (see e.g., Robbins and Monro \cite{RobMon51}, Nevel'son and Khas'minskii \cite{NevHas73}, Dvoretzky \cite{Dvo86}, Borkar \cite{Bor08}).
The case of independent data was developed by, among others, Fabian \cite{Fab68} and \cite{Fab78}. 
We refer to Sharia \cite{Sha98}, \cite{Sha07}, \cite{Sha08} and \cite{Sha10_aism} for asymptotics of time-series model recursive estimator. 
See Kutoyants \cite[Section 2.6.6]{Kut04} and Lazrieva et al. \cite{LarShaTor08} as well as the references therein for the case of continuous-time data from a diffusion model.

\medskip

The model we consider through this paper is the scaled Wiener process with drift:
\begin{align}
X_{t}=\int_{0}^{t}\mu_{s}ds + \sqrt{\beta}w_{t},
\label{ex:wp-2}
\end{align}
where $\mu$ is a random and time-varying nuisance
process. Suppose that we observe only discrete-time sample $(X_{t^{n}_{j}})_{j=0}^{n}$ for $t^{n}_{j}=jh_{n}$ with $h_{n}\to 0$, where $T_{n}:=nh_{n}\rightarrow \infty$. 
We know that the quasi-MLE of a true value $\beta_{0}$ with regarding $\mu\equiv 0$ is
\begin{align}
\hatb_{n}^{qmle}=\frac{1}{T_{n}}\sumj(\D_{j}^{n}X)^{2},
\label{eq:qmle-beta}
\end{align}
where $\Delta_{j}^{n}X=X_{t_{j}^{n}}-X_{t_{j-1}^{n}}$, and $\hatb_{n}^{qmle}$ has the asymptotic normality and the asymptotic efficiency. The aim of this paper is to propose an update estimation method for the diffusion parameter $\beta$ under the nuisance drift element $\int \mu_{s}ds$. 
Specifically, 
we wish to construct a sequence of estimates $\hat{\beta}_{0}^{n},\hat{\beta}_{1}^{n},\hat{\beta}_{2}^{n},\ldots,\hat{\beta}_{n}^{n}$, computed in a recursive manner, in such a way that for any $j\leq n$, the difference $\hat{\beta}_{j}^{n}-\hat{\beta}_{j-1}^{n}$ is a function described by $\hat{\beta}_{j-1}^{n}$, $h_{n}$, $j$, and some (not all) data, 
and $\hat{\beta}_{n}^{n}$ exhibits suitable convergence property for an arbitrary initial value $\hat{\beta}_{0}^{0}$.
Usually this estimator does not require any numerically hard optimization, while its asymptotic behavior does require careful investigation. 
Note the difference from the recursive (online) estimation for sample $X_{1},X_{2},\dots$, where each successive estimator is obtained by using data observed one after another. 
We will bring about good asymptotic property of $\hat{\beta}_{n}^{n}$ as efficient as the quasi-MLE, i.e we ensure the asymptotic equivalence of $\hat{\beta}_{n}^{n}$ to $\hat{\beta}_{n}^{qmle}$:
\begin{align}
\sqrt{n}(\hat{\beta}_{n}^{qmle}-\hat{\beta}_{n}^{n})\xrightarrow{P} 0. \nonumber
\end{align}

\medskip 

This paper is organized as follows. In Section \ref{sec:objective}, we propose an update formula and give a main theorem with the proof. 
We consider an example and present the simulation result to illustrate the theory in Section \ref{sec:sim}.

\section{Main result}
\label{sec:objective}
We consider the scaled Wiener process with drift \eqref{ex:wp-2},
where $\mu$ is a random and time-varying nuisance
process and $\beta\in \Theta_{\beta} \subset \mathbb{R}_{+}$. 
We denote by $(P_{\beta}: \beta\in\Theta_{\beta})$ the family of distributions of $X$ associated with $\beta$: 
$P_{\beta}(B):=P\circ X^{-1}(B)$, $B\in\mcb\big(\mcc(\mbbrp;\mbbr)\big)$. 
Let $\mcf_{t}:=\sig\{X_{0}, (w_{s})_{s\le t}\}$, the underlying filtration. 
Suppose that we observe only discrete-time sample $\mbX_{n}:=(X_{t_{j}^{n}})_{j=0}^{n}$ for $t_{j}^{n}=jh_{n}$ with $h_{n}\to 0$, 
where $T_{n}=nh_{n}\to\infty$. 
We wish to estimate the true value $\beta_{0}\in \Theta_{\beta}$ based on $\mbX_{n}$. 
As is well known, the quasi-likelihood function, denote by $\mathbb{M}_{n}$, with regarding $\mu\equiv 0$ is
\begin{align}
\mathbb{M}_{n}(\beta)=\sum_{j=1}^{n}\log\frac{1}{\sqrt{2\pi \beta h_{n}}}{\rm exp}\left(-\frac{(\Delta_{j}^{n}X)^{2}}{2\beta h_{n}}\right).
\label{eq:contrast}
\end{align}
Therefore, the quasi-MLE of $\beta$ is \eqref{eq:qmle-beta},
and $\hatb_{n}^{qmle}$ has the asymptotic normality
\begin{align}
\sqrt{n}(\hatb_{n}^{qmle}-\beta_{0})\xrightarrow{\mathcal{L}} N(0,2\beta_{0}^{2}) \nonumber
\end{align}
and the asymptotic efficiency. The aim of this paper is to propose an update estimation method for the diffusion parameter $\beta$ having the form
\begin{align}
\hat{\beta}_{j}^{n}=\hat{\beta}_{j-1}^{n}+F_{n,j}(\hat{\beta}_{j-1}^{n};X_{t_{j}^{n}}), \qquad j\leq n, \nonumber
\end{align}
where $F_{n,j}$ is an appropriate fine tuning function. The upper index $n$ implies that we have the data set $\mbX_{n}$. 
Under the nuisance drift element $\int \mu_{s}ds$, we ensure the asymptotic equivalence of $\hat{\beta}_{n}^{n}$ to $\hat{\beta}_{n}^{qmle}$.
Therefore, the asymptotic normality and the asymptotic efficiency of $\hat{\beta}_{n}^{n}$ hold. 

\medskip

We now define some notations: denote $a_{n}$ and $V(\beta_{0})$ by the convergence rate and the asymptotic variance of $\hat{\beta}_{n}^{qmle}$, respectively, i.e., $a_{n}=\sqrt{n}$ and $V(\beta_{0})=2\beta_{0}^{2}$.
We also define the quasi-likelihood function \eqref{eq:contrast} as $\mathbb{M}_{n}(\beta)=:\sum_{j=1}^{n}m_{n,j}(\beta)$.
Then, we propose the following update formula:
\begin{align}
\hatb_{j}^{n} &= \hatb_{j-1}^{n} + \frac{V(\hatb_{j-1}^{n})}{a_{j}^{2}}\p m_{n,j}(\hatb_{j-1}^{n}), \qquad j\leq n.
\label{rf_2}
\end{align}
Note that, in finite sample, this formula is more stable than the one which is derived by direct using of the Newton-Raphson method.
We have
\begin{align}
\frac{V(\beta)}{a_{j}^{2}}=\frac{2\beta^{2}}{j}, \qquad
\p m_{n,j}(\beta)=-\frac{1}{2\beta}+\frac{(\D_{j}^{n}X)^{2}}{2h_{n}\beta^{2}}, \qquad j\leq n, \nonumber
\end{align}
hence the update formula \eqref{rf_2} is
\begin{align}
\hat{\beta}_{j}^{n}&=\hat{\beta}_{j-1}^{n}+\frac{2(\hat{\beta}_{j-1}^{n})^{2}}{j}\left\{-\frac{1}{2\hat{\beta}_{j-1}^{n}}+\frac{(\D_{j}^{n}X)^{2}}{2h_{n}(\hat{\beta}_{j-1}^{n})^{2}}\right\} \nn \\
&=\Big(1-\frac{1}{j}\Big)\hat{\beta}_{j-1}^{n}+\frac{(\Delta_{j}^{n}X)^{2}}{jh_{n}}, \qquad j\leq n.
\label{rf-diffusion}
\end{align}
Note that right-hand side of \eqref{rf-diffusion} is positive a.s. for any initial value $\hat{\beta}_{0}^{n}\in \Theta_{\beta}$.
Then, for any $n\in \mathbb{N}$ and $j\leq n$ we define 
\begin{align}
\overline{\mu}_{n,j}:=\frac{1}{h_{n}}\int_{t_{j-1}}^{t_{j}}\mu_{s}ds, \qquad 
Y_{n,j}:=\frac{\Delta_{j}^{n}X-h_{n}\overline{\mu}_{n,j}}{\sqrt{h_{n}\beta_{0}}}. \nonumber
\end{align}
Note that $Y_{n,j}\stackrel{i.i.d.}{\sim}N(0,1)$ for $j\leq n$.
Now, we set two assumptions: 
\begin{assum}
There exists $0<\epsilon_{0}<1/2$ such that $T_{n}\lesssim n^{\epsilon_{0}}$.
\label{ass:hn-rate2}
\end{assum}
\begin{assum}
\begin{align}
\sqrt{\frac{h_{n}}{n}}\sum_{j=1}^{n}E_{\beta_{0}}\left[Y_{n,j}\overline{\mu}_{n,j}\big|\mathcal{F}_{t_{j-1}^{n}}\right]\xrightarrow{P} 0;
\label{ass:m1}
\end{align}
\begin{align}
\sup_{n>0}\sup_{t\leq T_{n}}E_{\beta_{0}}\left[|\mu_{t}|^{2}\right]<\infty.
\label{ass:m2}
\end{align}
\label{ass:moment}
\end{assum}
The notation $A_{n}\lesssim B_{n}$ in Assumption \ref{ass:hn-rate2} means that $\sup_{n}(A_{n}/B_{n})<\infty$. The following Theorem \ref{thm:mainthm} is a main result in this paper. 


\begin{thm}
Consider the model \eqref{ex:wp-2}. Assume that Assumptions \ref{ass:hn-rate2} and \ref{ass:moment} hold.
Then, for any $\hat{\beta}_{0}^{n}\in \Theta_{\beta}$ the update formula \eqref{rf-diffusion} generates an estimator $\hat{\beta}_{n}^{n}$, which is asymptotic equivalent to $\hat{\beta}_{n}^{qmle}$:
\begin{align}
\sqrt{n}(\hat{\beta}_{n}^{qmle}-\hat{\beta}_{n}^{n})\xrightarrow{P} 0.
\label{claim}
\end{align}
Therefore, it has the asymptotic normality:
\begin{align}
\sqrt{n}(\hat{\beta}_{n}^{n}-\beta_{0})\xrightarrow{\mathcal{L}}N(0,2\beta_{0}^{2}) \nonumber
\end{align}
and the asymptotic efficiency.
\label{thm:mainthm}
\end{thm}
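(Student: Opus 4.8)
The plan is to separate the claim into an exact algebraic part and a probabilistic part. First I would observe that the recursion \eqref{rf-diffusion} can be solved in closed form, which turns the asymptotic equivalence \eqref{claim} into an identity; then I would prove the central limit theorem for $\hatb_{n}^{qmle}$ directly under the drift model, isolating the Gaussian fluctuation and showing that the two nuisance contributions are negligible at the scale $\sqrt{n}$.

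To solve \eqref{rf-diffusion}, multiply both sides by $j$ to get $j\hatb_{j}^{n}=(j-1)\hatb_{j-1}^{n}+(\D_{j}^{n}X)^{2}/h_{n}$, so that $S_{j}:=j\hatb_{j}^{n}$ telescopes: $S_{j}=S_{j-1}+(\D_{j}^{n}X)^{2}/h_{n}$. Since the coefficient $1-1/j$ vanishes at $j=1$, the initial value $\hatb_{0}^{n}$ drops out and $S_{0}=0$, whence
\begin{align}
\hatb_{n}^{n}=\frac{S_{n}}{n}=\frac{1}{nh_{n}}\sumj(\D_{j}^{n}X)^{2}=\frac{1}{T_{n}}\sumj(\D_{j}^{n}X)^{2}=\hatb_{n}^{qmle}. \nonumber
\end{align}
Thus the difference in \eqref{claim} is identically $0$ for every initial value, so the asymptotic equivalence holds with no probabilistic input, and it remains only to prove the stated normality of $\hatb_{n}^{qmle}$.

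For that, I would write $\D_{j}^{n}X=h_{n}\overline{\mu}_{n,j}+\sqrt{h_{n}\beta_{0}}\,Y_{n,j}$, expand the square in \eqref{eq:qmle-beta}, and obtain
\begin{align}
\sqrt{n}(\hatb_{n}^{qmle}-\beta_{0})
&=\frac{\beta_{0}}{\sqrt{n}}\sumj(Y_{n,j}^{2}-1)
+\frac{2\sqrt{\beta_{0}h_{n}}}{\sqrt{n}}\sumj\overline{\mu}_{n,j}Y_{n,j}
+\frac{h_{n}}{\sqrt{n}}\sumj\overline{\mu}_{n,j}^{2}. \nonumber
\end{align}
Because $Y_{n,j}\stackrel{i.i.d.}{\sim}N(0,1)$ with $E[Y_{n,j}^{2}-1]=0$ and $\mathrm{var}(Y_{n,j}^{2}-1)=2$, the Lindeberg--L\'evy CLT gives $\frac{\beta_{0}}{\sqrt{n}}\sumj(Y_{n,j}^{2}-1)\xrightarrow{\mathcal{L}}N(0,2\beta_{0}^{2})$, which already supplies the full limit law; by Slutsky's lemma it then suffices to show the other two sums are $o_{P}(1)$. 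The drift-squared sum is handled by a first-moment bound: Jensen gives $\overline{\mu}_{n,j}^{2}\le h_{n}^{-1}\int_{t_{j-1}^{n}}^{t_{j}^{n}}\mu_{s}^{2}\,ds$, so \eqref{ass:m2} yields $E[\overline{\mu}_{n,j}^{2}]\le C$ and hence $E\big[\frac{h_{n}}{\sqrt{n}}\sumj\overline{\mu}_{n,j}^{2}\big]\le C\sqrt{n}\,h_{n}=CT_{n}/\sqrt{n}$, which tends to $0$ because Assumption \ref{ass:hn-rate2} forces $T_{n}/\sqrt{n}\lesssim n^{\epsilon_{0}-1/2}\to 0$.

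The main obstacle is the cross sum $\frac{2\sqrt{\beta_{0}h_{n}}}{\sqrt{n}}\sumj\overline{\mu}_{n,j}Y_{n,j}$, since it need not be nonnegative and, as $\mu$ is adapted to the Brownian filtration, $\overline{\mu}_{n,j}$ and $Y_{n,j}$ are correlated, so the expectation cannot simply be factored. I would split $\overline{\mu}_{n,j}Y_{n,j}=E_{\beta_{0}}[\overline{\mu}_{n,j}Y_{n,j}|\mathcal{F}_{t_{j-1}^{n}}]+\zeta_{n,j}$ with $\zeta_{n,j}$ a martingale difference: the predictable part scaled by $\sqrt{h_{n}/n}$ is exactly the expression driven to $0$ in \eqref{ass:m1}, while for the martingale part I would bound its variance $\frac{h_{n}}{n}\sumj E[\zeta_{n,j}^{2}]\le \frac{h_{n}}{n}\sumj E[(\overline{\mu}_{n,j}Y_{n,j})^{2}]$ and control the last factor through \eqref{ass:m2}, obtaining a variance of order $h_{n}\to 0$. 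This is where the two halves of Assumption \ref{ass:moment} are genuinely used -- \eqref{ass:m1} annihilates the compensator and \eqref{ass:m2} tames the martingale increments -- while the restriction $\epsilon_{0}<1/2$ is what forces every nuisance term to vanish at the efficient rate. Assembling the three pieces gives $\sqrt{n}(\hatb_{n}^{qmle}-\beta_{0})\xrightarrow{\mathcal{L}}N(0,2\beta_{0}^{2})$, and since this matches the variance of the efficient quasi-MLE the asymptotic efficiency of $\hatb_{n}^{n}$ follows.
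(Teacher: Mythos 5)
Your proof of the asymptotic equivalence \eqref{claim} is correct and takes a genuinely different --- and considerably sharper --- route than the paper. The paper verifies the three conditions of Sharia's Lemma 1 to obtain local asymptotic linearity $\sqrt{n}(\hatb_{n}^{n}-\hatb_{n}^{\ast})\xrightarrow{P}0$, and only at the very end observes that the linear statistic $\hatb_{n}^{\ast}$ coincides with $\hatb_{n}^{qmle}$. Your telescoping argument ($j\hatb_{j}^{n}=(j-1)\hatb_{j-1}^{n}+(\D_{j}^{n}X)^{2}/h_{n}$, with the factor $1-1/j$ killing the initial value at $j=1$) shows that the recursion \eqref{rf-diffusion} is nothing but the recursive computation of the running mean of $(\D_{j}^{n}X)^{2}/h_{n}$, so that $\hatb_{n}^{n}=\hatb_{n}^{qmle}$ \emph{identically}, for every sample path and every initial value. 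This makes \eqref{claim} an exact algebraic identity requiring neither Assumption \ref{ass:hn-rate2} nor Assumption \ref{ass:moment}; what the paper's machinery buys is generality (it would survive in models where the update does not telescope), but for this specific model your observation trivializes the equivalence.

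One caveat concerns the part you added that the paper does not prove, namely the CLT for $\hatb_{n}^{qmle}$ itself (the paper simply asserts it as known and invokes it via ``Therefore''). Your decomposition into the $Y_{n,j}^{2}-1$ term, the cross term, and the drift-squared term is right, and the first and third terms are handled correctly. But in the cross term, your variance bound for the martingale part requires controlling $E_{\beta_{0}}\left[\overline{\mu}_{n,j}^{2}Y_{n,j}^{2}\right]$, and since $\overline{\mu}_{n,j}$ is \emph{not} $\mathcal{F}_{t_{j-1}^{n}}$-measurable (it averages $\mu_{s}$ over $[t_{j-1}^{n},t_{j}^{n}]$ and $\mu$ may be correlated with the driving Wiener process on that interval), you cannot condition it out; Cauchy--Schwarz would demand $\sup_{t}E_{\beta_{0}}[|\mu_{t}|^{4}]<\infty$, whereas \eqref{ass:m2} only supplies second moments. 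This is a genuine, if minor, gap --- fixable by strengthening the moment assumption or by a localization/truncation argument --- but it sits entirely in a step the paper itself leaves unproved, so it does not affect the comparison on \eqref{claim}.
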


\begin{proof}
To prove the theorem, we use the result of Sharia \cite[Lemma 1]{Sha10_aism}. We change some notations from the original ones of Sharia \cite{Sha10_aism} in terms of a triangular array of random variables. 
We drop the index $n$ to simplify some notations. 
We can rewrite \eqref{rf-diffusion} to
\begin{align}
\hat{\beta}_{j}&=\hat{\beta}_{j-1}+\frac{2\hat{\beta}_{j-1}^{2}}{j}\left\{-\frac{1}{2\hat{\beta}_{j-1}}+\frac{(\sqrt{h_{n}\beta_{0}}Y_{j}+h_{n}\overline{\mu}_{j})^{2}}{2h_{n}\hat{\beta}_{j-1}^{2}}\right\} \nn \\
&=\hat{\beta}_{j-1}+\frac{2\hat{\beta}_{j-1}^{2}}{j}\left\{-\frac{1}{2\hat{\beta}_{j-1}}+\frac{\beta_{0}}{2\hat{\beta}_{j-1}^{2}}\left(Y_{j}+\overline{\mu}_{j}\sqrt{\frac{h_{n}}{\beta_{0}}}\right)^{2}\right\} \nn \\
&=:\hat{\beta}_{j-1}+\Gamma_{j}(\hat{\beta}_{j-1})^{-1}\cdot \psi_{n,j}(\hat{\beta}_{j-1}), \nonumber 
\end{align}
where 
\begin{align}
\Gamma_{j}(\beta):=
\frac{j}{2\beta^{2}}, \qquad
\psi_{n,j}(\beta):=
-\frac{1}{2\beta}+\frac{\beta_{0}}{2\beta^{2}}\left(Y_{j}+\overline{\mu}_{j}\sqrt{\frac{h_{n}}{\beta_{0}}}\right)^{2}. \nonumber
\end{align}
Moreover, we define the following notations:
\begin{align}
b_{n,j}(\beta_{0},d_{j-1})=E_{\beta_{0}}\left[\psi_{n,j}(\hat{\beta}_{j-1})\big|\mathcal{F}_{t_{j-1}}\right]; \nonumber
\end{align}
\begin{align}
R_{n,j}(\beta_{0},d_{j-1})=\Gamma_{j}(\beta_{0})\Gamma_{j}^{-1}(\hat{\beta}_{j-1})b_{n,j}(\beta_{0},d_{j-1}); \nonumber
\end{align}
\begin{align}
\mathcal{E}_{n,j}(\beta_{0},d_{j-1})=\Gamma_{j}(\beta_{0})\Gamma_{j}^{-1}(\hat{\beta}_{j-1})\{\psi_{n,j}(\hat{\beta}_{j-1})-b_{n,j}(\beta_{0},d_{j-1})\}-\psi_{n,j}(\beta_{0}), \nonumber
\end{align}
where $d_{j}:=\hat{\beta}_{j}-\beta_{0}$ and $j \leq n$, and introduce the conditions of Sharia \cite[Lemma 1]{Sha10_aism} applied to our model setting:
\begin{enumerate}
\item
\begin{align}
\frac{(\sqrt{n})^{2}}{\Gamma_{n}(\beta_{0})}\xrightarrow{\mathcal{L}} \eta(\beta_{0}) \nn
\end{align}
w.r.t. $P_{\beta_{0}}$, where $\eta(\beta_{0})$ is a random variable with $\eta(\beta_{0})<\infty\ P_{\beta_{0}}{\rm \mathchar`-a.s.}$;
\item
\begin{align}
\lim_{n\rightarrow \infty}\frac{1}{\sqrt{n}}\sum_{j=1}^{n}\big\{\big(\Gamma_{j}(\beta_{0})-\Gamma_{j-1}(\beta_{0})\big)d_{j-1}+R_{n,j}(\beta_{0},d_{j-1})\big\}=0 \nn
\end{align}
in probability $P_{\beta_{0}}$;
\item
\begin{align}
\lim_{n\rightarrow \infty}\frac{1}{\sqrt{n}}\sum_{j=1}^{n}\mathcal{E}_{n,j}(\beta_{0},d_{j-1})=0 \nn
\end{align}
in probability $P_{\beta_{0}}$.
\end{enumerate}

\bigskip

Now, let us show the above conditions and derive the {\it local asymptotic linearity} of $\hat{\beta}_{n}^{n}$:
\begin{align}
\sqrt{n}(\hat{\beta}_{n}^{n}-\hat{\beta}_{n}^{\ast})\xrightarrow{P} 0,
\label{eq:L.A.L-beta}
\end{align}
where
\begin{align}
\hat{\beta}_{n}^{\ast}=\beta_{0}+\Gamma_{n}^{-1}(\beta_{0})\sum_{j=1}^{n}\psi_{n,j}(\beta_{0}) \nonumber
\end{align}
is a linear statistic. Clearly, (i) holds since we have $n/\Gamma_{n}(\beta_{0})=2\beta_{0}^{2}$. To check (ii), we calculate 
\begin{align}
b_{n,j}(\beta_{0},d_{j-1})
&=-\frac{1}{2\hat{\beta}_{j-1}}+\frac{\beta_{0}}{2\hat{\beta}_{j-1}^{2}}E_{\beta_{0}}\left[\left(Y_{j}+\overline{\mu}_{j}\sqrt{\frac{h_{n}}{\beta_{0}}}\right)^{2}\Big|\mathcal{F}_{t_{j-1}}\right] \nn \\
&=-\frac{1}{2\hat{\beta}_{j-1}}+\frac{\beta_{0}}{2\hat{\beta}_{j-1}^{2}}\left(1+2\sqrt{\frac{h_{n}}{\beta_{0}}}E_{\beta_{0}}\left[Y_{j}\overline{\mu}_{j}\big|\mathcal{F}_{t_{j-1}}\right]+\frac{h_{n}}{\beta_{0}}E_{\beta_{0}}\left[|\overline{\mu}_{j}|^{2}\big|\mathcal{F}_{t_{j-1}}\right]\right) \nn \\
&=-\frac{d_{j-1}}{2\hat{\beta}_{j-1}^{2}}+\frac{\sqrt{h_{n}\beta_{0}}}{\hat{\beta}_{j-1}^{2}}E_{\beta_{0}}\left[Y_{j}\overline{\mu}_{j}\big|\mathcal{F}_{t_{j-1}}\right]+\frac{h_{n}}{2\hat{\beta}_{j-1}^{2}}E_{\beta_{0}}\left[|\overline{\mu}_{j}|^{2}\big|\mathcal{F}_{t_{j-1}}\right], \nonumber
\end{align}
\begin{align}
R_{n,j}(\beta_{0},d_{j-1})
&=\frac{j}{2\beta_{0}^{2}}\frac{2\hat{\beta}_{j-1}^{2}}{j}\left\{-\frac{d_{j-1}}{2\hat{\beta}_{j-1}^{2}}+\frac{\sqrt{h_{n}\beta_{0}}}{\hat{\beta}_{j-1}^{2}}E_{\beta_{0}}\left[Y_{j}\overline{\mu}_{j}\big|\mathcal{F}_{t_{j-1}}\right]+\frac{h_{n}}{2\hat{\beta}_{j-1}^{2}}E_{\beta_{0}}\left[|\overline{\mu}_{j}|^{2}\big|\mathcal{F}_{t_{j-1}}\right]\right\} \nn \\
&=\frac{1}{\beta_{0}^{2}}\left(-\frac{d_{j-1}}{2}+\sqrt{h_{n}\beta_{0}}E_{\beta_{0}}\left[Y_{j}\overline{\mu}_{j}\big|\mathcal{F}_{t_{j-1}}\right]+\frac{h_{n}}{2}E_{\beta_{0}}\left[|\overline{\mu}_{j}|^{2}\big|\mathcal{F}_{t_{j-1}}\right]\right). \nonumber
\end{align}
Hence, we have 
\begin{align}
&\big(\Gamma_{j}(\beta_{0})-\Gamma_{j-1}(\beta_{0})\big)d_{j-1}+R_{n,j}(\beta_{0},d_{j-1}) \nn \\
&=\left(\frac{j}{2\beta_{0}^{2}}-\frac{j-1}{2\beta_{0}^{2}}\right)d_{j-1}+\frac{1}{\beta_{0}^{2}}\left(-\frac{d_{j-1}}{2}+\sqrt{h_{n}\beta_{0}}E_{\beta_{0}}\left[Y_{j}\overline{\mu}_{j}\big|\mathcal{F}_{t_{j-1}}\right]+\frac{h_{n}}{2}E_{\beta_{0}}\left[|\overline{\mu}_{j}|^{2}\big|\mathcal{F}_{t_{j-1}}\right]\right) \nn \\
&=\frac{1}{\beta_{0}^{2}}\left(\sqrt{h_{n}\beta_{0}}E_{\beta_{0}}\left[Y_{j}\overline{\mu}_{j}\big|\mathcal{F}_{t_{j-1}}\right]+\frac{h_{n}}{2}E_{\beta_{0}}\left[|\overline{\mu}_{j}|^{2}\big|\mathcal{F}_{t_{j-1}}\right]\right), \nonumber
\end{align}
and therefore
\begin{align}
&\frac{1}{\sqrt{n}}\sum_{j=1}^{n}\left\{(\Gamma_{j}(\beta_{0})-\Gamma_{j-1}(\beta_{0}))d_{j-1}+R_{n,j}(\beta_{0},d_{j-1})\right\} \nn \\
&\lesssim \sqrt{\frac{h_{n}}{n}}\sum_{j=1}^{n}E_{\beta_{0}}\left[Y_{j}\overline{\mu}_{j}\big|\mathcal{F}_{t_{j-1}}\right]+\frac{h_{n}}{\sqrt{n}}\sum_{j=1}^{n}E_{\beta_{0}}\left[|\overline{\mu}_{j}|^{2}\big|\mathcal{F}_{t_{j-1}}\right].
\label{eq:(ii)}
\end{align}
The first term of \eqref{eq:(ii)} converges to $0$ in probability $P_{\beta_{0}}$ since we assume \eqref{ass:m1}. Let us show
\begin{align}
Z_{n}:=\frac{h_{n}}{\sqrt{n}}\sum_{j=1}^{n}E_{\beta_{0}}\left[|\overline{\mu}_{j}|^{2}\big|\mathcal{F}_{t_{j-1}}\right]\rightarrow 0 \nonumber
\end{align}
in probability $P_{\beta_{0}}$. Then, by making use of Markov's inequality, Assumption \ref{ass:hn-rate2} and \eqref{ass:m2}, we obtain for any $\epsilon>0$
\begin{align}
P_{\beta_{0}}\left(|Z_{n}|>\epsilon\right)\leq \frac{h_{n}}{\epsilon\sqrt{n}}\sum_{j=1}^{n}E_{\beta_{0}}\left[|\overline{\mu}_{j}|^{2}\right]\lesssim h_{n}\sqrt{n}\rightarrow 0, \nonumber
\end{align}
hence (ii). To check the condition (iii), we also calculate
\begin{align}
&\mathcal{E}_{n,j}(\beta_{0},d_{j-1}) \nonumber \\ 
&=\frac{j}{2\beta_{0}^{2}}\frac{2\hat{\beta}_{j-1}^{2}}{j}\left\{-\frac{1}{2\hat{\beta}_{j-1}}+\frac{\beta_{0}}{2\hat{\beta}_{j-1}^{2}}\left(Y_{j}+\overline{\mu}_{j}\sqrt{\frac{h_{n}}{\beta_{0}}}\right)^{2}+\frac{d_{j-1}}{2\hat{\beta}_{j-1}^{2}} \right.\nn \\
&\ \ \ \left.-\frac{\sqrt{h_{n}\beta_{0}}}{\hat{\beta}_{j-1}^{2}}E_{\beta_{0}}\left[Y_{j}\overline{\mu}_{j}\big|\mathcal{F}_{t_{j-1}}\right]-\frac{h_{n}}{2\hat{\beta}_{j-1}^{2}}E_{\beta_{0}}\left[\overline{\mu}_{j}^{2}\big|\mathcal{F}_{t_{j-1}}\right]\right\}-\left\{-\frac{1}{2\beta_{0}}+\frac{\beta_{0}}{2\beta_{0}^{2}}\left(Y_{j}+\overline{\mu}_{j}\sqrt{\frac{h_{n}}{\beta_{0}}}\right)^{2}\right\} \nn \\
&=\frac{1}{\beta_{0}^{2}}\left\{-\frac{\hat{\beta}_{j-1}}{2}+\frac{\beta_{0}}{2}\left(Y_{j}+\overline{\mu}_{j}\sqrt{\frac{h_{n}}{\beta_{0}}}\right)^{2}+\frac{d_{j-1}}{2}-\sqrt{h_{n}\beta_{0}}E_{\beta_{0}}\left[Y_{j}\overline{\mu}_{j}\big|\mathcal{F}_{t_{j-1}}\right]-\frac{h_{n}}{2}E_{\beta_{0}}\left[|\overline{\mu}_{j}|^{2}\big|\mathcal{F}_{t_{j-1}}\right]\right\} \nn \\
&\ \ \ +\frac{1}{2\beta_{0}}-\frac{1}{2\beta_{0}}\left(Y_{j}+\overline{\mu}_{j}\sqrt{\frac{h_{n}}{\beta_{0}}}\right)^{2} \nn \\
&=\frac{1}{\beta_{0}^{2}}\left(-\sqrt{h_{n}\beta_{0}}E_{\beta_{0}}\left[Y_{j}\overline{\mu}_{j}\big|\mathcal{F}_{t_{j-1}}\right]-\frac{h_{n}}{2}E_{\beta_{0}}\left[|\overline{\mu}_{j}|^{2}\big|\mathcal{F}_{t_{j-1}}\right]\right). \nonumber
\end{align}
Assumptions \ref{ass:hn-rate2} and \ref{ass:moment} also ensure the condition (iii):
\begin{align}
\lim_{n\rightarrow\infty}\frac{1}{\sqrt{n}}\sum_{j=1}^{n}\mathcal{E}_{j}(\beta_{0},d_{j-1})=0 \nonumber
\end{align}
in probability $P_{\beta_{0}}$. Consequently, we derive the local asymptotic linearity \eqref{eq:L.A.L-beta} from Sharia \cite[Lemma 1]{Sha10_aism}.
In this case, we obtain
\begin{align}
\hat{\beta}_{n}^{\ast}&=\beta_{0}+\frac{2\beta_{0}^{2}}{n}\sum_{j=1}^{n}\left\{-\frac{1}{2\beta_{0}}+\frac{\beta_{0}}{2\beta_{0}^{2}}\left(Y_{j}+\overline{\mu}_{j}\sqrt{\frac{h_{n}}{\beta_{0}}}\right)^{2}\right\} \nonumber \\
&=\frac{1}{T_{n}}\sum_{j=1}^{n}(\Delta_{j}^{n}X)^{2}=\hat{\beta}_{n}^{qmle}, \nonumber 
\end{align}
therefore we can conclude the claim \eqref{claim}.
\end{proof}

\begin{rem}
\upshape
See e.g., Jacod and Shiryaev \cite{JacShi87} for more detailed discussion on the asymptotic properties of an asymptotically linear estimator. \hfill $\Box$
\end{rem}

\section{Simulation}
\label{sec:sim}
We performed a simulation study to validate our proposed update formula. 
We consider the model
\begin{align}
X_{t}=\al t + \sqrt{\beta}w_{t}, \nonumber
\label{ex:wp-1}
\end{align}
where we know the true value $\alpha_{0} \in \mathbb{R}$. Note that this means $\mu_{s}\equiv \al_{0}$, hence Assumption \ref{ass:moment} holds. 
We set $(\alpha_{0},\beta_{0})=(-1,3)$, data size $N=500$ and the Monte Carlo trial number $L=100$. We also set $h_{N}=N^{-3/4}$ to satisfy Assumption \ref{ass:hn-rate2}. We generate $\{\hat{\beta}_{j}^{N}\}_{j=0}^{N}$ through the following steps:
\begin{itemize}
\item Step 1: Generate a initial value $\hat{\beta}_{0}^{N}$ by $\beta_{0}+\beta_{0}U$, where $U\sim U(-0.5,0.5)$.
\item Step 2: Update $\hat{\beta}_{j}^{N}$ from $j=1$ to $j=N$ by \eqref{rf-diffusion} with $\Delta_{j}^{N}X\stackrel{i.i.d.}{\sim}N(\alpha_{0}h_{N},\beta_{0}h_{N})$, and obtain $\{\hat{\beta}_{j}^{N}\}_{j=0}^{N}$.
\end{itemize}
We repeat Step 1 and 2 $L$ times. The following Figures preset the simulation results. Figures \ref{fig:sam} and \ref{fig:beta-trace} are traces of data $\mbX_{N}=(X_{t_{j}^{N}})_{j=1}^{N}$ and $\hat{\beta}_{j}^{N}$ at $l=100$, respectively ($l(\leq L)$ denote each steps of Monte Carlo trials). $y$-axis has update times $j\leq N$. The dotted line in Figure \ref{fig:beta-trace} denotes the true value $\beta_{0}=3$. In Figure \ref{fig:average-trace} 
we plot $(j,\overline{\hat{\beta}_{j}^{N}})$ for $j\leq N$, where
\begin{align}
\overline{\hat{\beta}_{j}^{N}}:=\frac{1}{L}\sum_{l=1}^{L}\hat{\beta}_{j}^{N,l}, \nonumber
\end{align}
and $\hat{\beta}_{j}^{N,l}$ denotes the value of $\hat{\beta}_{j}^{N}$ at $l\leq L$. We give a QQ-plot for $W_{N}:=\sqrt{N/2\beta_{0}^{2}}(\hat{\beta}_{N}^{N}-\beta_{0})$ in Figure \ref{fig:qqplot}.
From the Figure \ref{fig:average-trace}, 
we expect that $\hat{\beta}_{N}^{N}\xrightarrow{P}\beta_{0}$ as $N\rightarrow \infty$. Actually, whenever we repeat this algorithm, $\overline{\hat{\beta}_{N}^{N}}$
nearly equal to $\beta_{0}$ and standard deviations are not so large (about $0.19$). We also expect that
\begin{align}
W_{N}\xrightarrow{\mathcal{L}}N(0,1) \nonumber
\end{align}
as $N\rightarrow \infty$ from the QQ-plot in Figure \ref{fig:qqplot} since the plots almost lay on the 45 degree line.

\begin{figure}[htbp]
\begin{minipage}{0.49\hsize}
\begin{center}
\includegraphics[width=70mm, height=70mm]{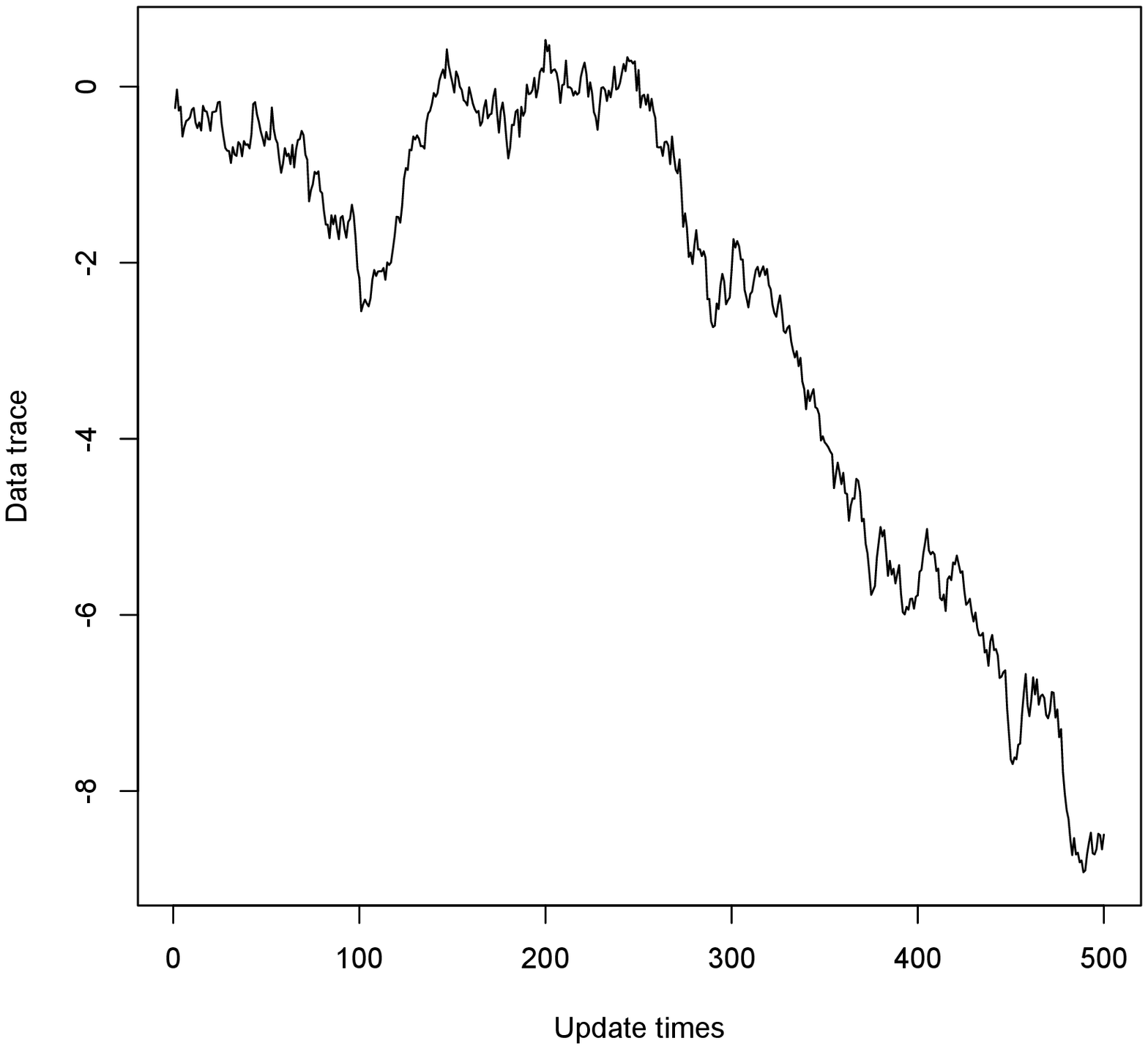}
\end{center}
\vspace{-5mm}
\caption{Trace of data $\mbX_{N}$.}
\label{fig:sam}
\end{minipage}
\begin{minipage}{0.49\hsize}
\begin{center}
\includegraphics[width=70mm, height=70mm]{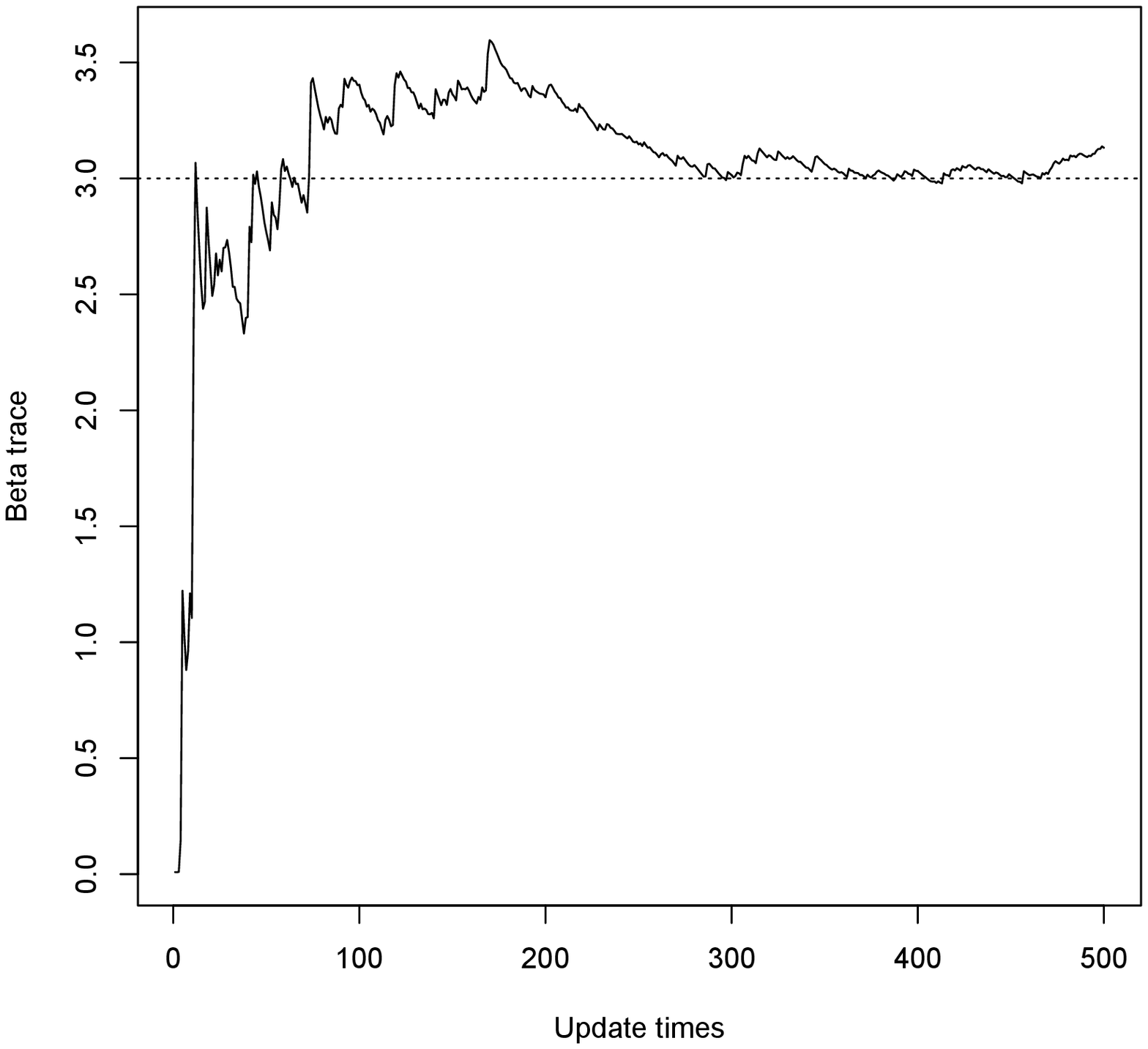}
\end{center}
\vspace{-5mm}
\caption{Trace of $\hat{\beta}_{j}^{N}$.}
\label{fig:beta-trace}
\end{minipage}
\begin{minipage}{0.49\hsize}
\begin{center}
\includegraphics[width=70mm, height=70mm]{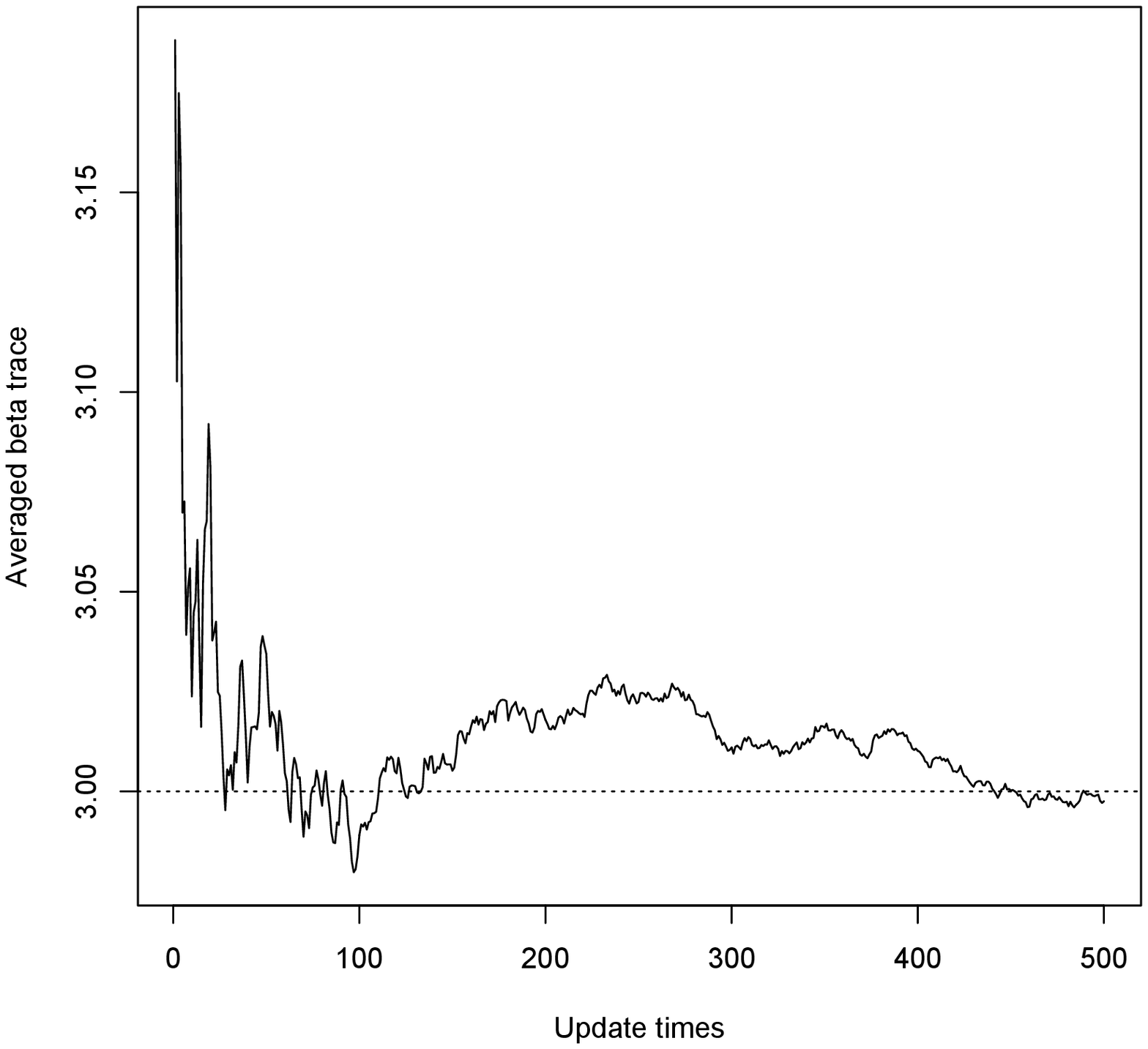}
\end{center}
\vspace{-5mm}
\caption{Trace of $\overline{\hat{\beta}_{j}^{N}}$.}
\label{fig:average-trace}
\end{minipage}
\begin{minipage}{0.49\hsize}
\begin{center}
\includegraphics[width=70mm, height=70mm]{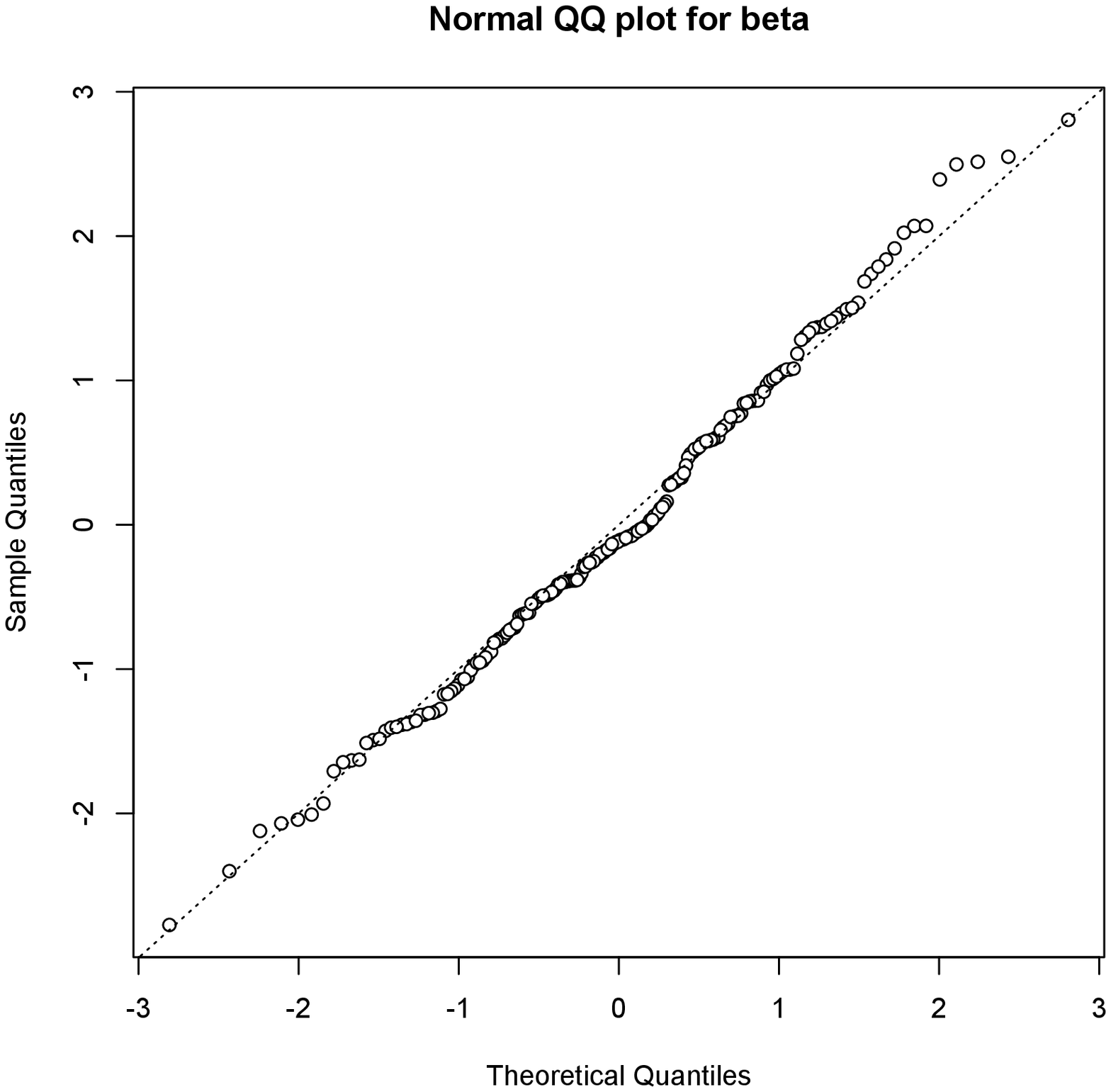}
\end{center}
\vspace{-5mm}
\caption{{\small QQ-plot for $W_{N}$.}}
\label{fig:qqplot}
\end{minipage}
\end{figure}


\begin{ack}
\upshape
The author is grateful to Professor H. Masuda, Kyushu university, for his valuable comments.
\end{ack}


\def\cprime{$'$} \def\polhk#1{\setbox0=\hbox{#1}{\ooalign{\hidewidth
  \lower1.5ex\hbox{`}\hidewidth\crcr\unhbox0}}} \def\cprime{$'$}
  \def\cprime{$'$}

\end{document}